\newcommand{\ud}{\mathrm{d}}
\numberwithin{equation}{section}
\newtheorem{theorem}{Theorem}[section]
\newtheorem{lemma}[theorem]{Lemma}
\newtheorem{prop}[theorem]{Proposition}
\newtheorem{remark}[theorem]{Remark}
\theoremstyle{definition}
\newtheorem{rem}[theorem]{Remark}
\numberwithin{equation}{section}
\begin{document}

\thispagestyle{empty}

\vspace*{1cm}

\begin{center}

{\LARGE\bf A lower bound on the spectral gap of one-dimensional Schrödinger operators} \\

\vspace*{2cm}

{\large Joachim Kerner \footnote{E-mail address: {\tt Joachim.Kerner@fernuni-hagen.de}}}%

\vspace*{5mm}

Department of Mathematics and Computer Science\\
FernUniversit\"{a}t in Hagen\\
58084 Hagen\\
Germany\\

\end{center}

\vfill

\begin{abstract} In this note we provide an explicit lower bound on the spectral gap of one-dimensional Schrödinger operators with non-negative bounded potentials and subject to Neumann boundary conditions. 
\end{abstract}

\newpage

\section{Introduction}
In this paper, our aim is to derive a lower bound on the spectral gap for (deterministic) Schrödinger operators defined on an interval of length $L > 0$, subject to Neumann boundary conditions, and with non-negative potentials in the class $L^{\infty}(\mathbb{R})$. Our bound will be valid for all values of $L > 0$ and will depend on the underlying potential in an explicit way. For convenience, we also provide some additional results which are not necessary to derive the lower bound but which might be of some interest nevertheless.


The paper also forms a natural continuation of the investigations started recently in \cite{KT20}. There, it was shown that the spectral gap of Schrödinger operators (subject to Dirichlet boundary conditions) closes, in the limit of $L \rightarrow \infty$ and for a large class of potentials, strictly faster than the spectral gap of the free (Dirichlet) Laplacian. This holds, in particular, for non-zero bounded potentials $v:\mathbb{R} \rightarrow \mathbb{R}_+$ of compact support. More explicitly, for a symmetric step-potential and all $L > 0$ large enough, lower and upper bounds on the spectral gap were derived showing that the gap closes like $\sim L^{-3}$ and it was conjectured that the gap cannot close faster than $\sim L^{-3}$ for bounded potentials of compact support. However, no lower bound on the spectral gap was derived in general and this is what motivated the investigations in this paper. Since we employ a version of Harnack's inequality, our lower bound is (for non-zero potentials) at least exponentially small in the interval length and therefore does not lead to a proof of a conjecture as formulated in \cite{KT20}. Nevertheless, it provides a first step and the bound obtained is valid in greater generality; in particular, it is not restricted to the limit of large intervals.\\
                 Finally, let us refer to \cite{KirschSimongapOOO} where lower bounds on the spectral gaps between consecutive eigenvalues for one-dimensional Schrödinger operators have been studied in a different setting.
                                                                        
\section{The model and results}
On the interval $I=(-L/2,+L/2)$ we consider a Schrödinger operator of the form
\begin{equation*}
h_L=-\frac{\ud^2}{\ud x^2}+v
\end{equation*}
with real, non-negative potentials $v \in L^{\infty}(\mathbb{R})$ and Neumann boundary conditions at the endpoints $x=\pm L/2$. Standard operator theory tells us that $h_L$ is self-adjoint with purely discrete spectrum. We denote its eigenvalues as $\lambda_0(L)\leq \lambda_1(L) \leq ...$; the normalized ground-state eigenfunction shall be denoted as $\varphi^0_L \in L^2(I)$. We also recall that $\varphi^0_L$ is a positive function.

Our main object of interest is the spectral gap 
\begin{equation*}
\Gamma_v(L):=\lambda_1(L)-\lambda_0(L)\ .
\end{equation*}
We remark that, since the ground state is non-degenerate \cite{LiebLoss}, the spectral gap is strictly larger than zero for every value $L > 0$. Also, $\Gamma_v(L)$ converges to zero as $L \rightarrow \infty$ for potentials that decay sufficiently fast at infinity (for example, as in [Theorem~2.1,\cite{KT20}], it suffices to assume $|v(x)|\leq \frac{C}{|x|^2}$ for some $C >0$ and almost all $x \in \mathbb{R}$).

	%
%
In a first result we provide an upper bound on the supremum (or maximum) of the ground-state eigenfunction for potentials that decay at least quadratically at infinity. We note that such a class of short-range potentials was considered in \cite{KT20}. 
\begin{prop}\label{UpperBoundSup} Let $v \in L^{\infty}(\mathbb{R})$ be non-negative and such that 
	\begin{equation*}
	|v(x)| \leq \frac{C}{|x|^{2}}
	\end{equation*}
	for some constant $C > 0$ and almost all $x \in \mathbb{R}$. Then
	\begin{equation}\label{XXX}\begin{split}
	\sup_{x \in I}|\varphi^0_L(x)|\leq  \frac{1+\sqrt{4\pi^2+16C}}{\sqrt{L}}
	\end{split}
	\end{equation}
	holds for all $L > 0$.
\end{prop}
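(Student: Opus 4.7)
The plan is to decompose $\varphi^0_L$ into its mean and a mean-zero fluctuation, bound the two pieces separately using Cauchy--Schwarz and the fundamental theorem of calculus, and then control the ground-state eigenvalue via the min--max principle.

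Specifically, I would set $\bar\varphi := L^{-1}\int_I\varphi^0_L\,dx$ and $\phi := \varphi^0_L - \bar\varphi$, so that $\phi$ has vanishing mean on $I$. Cauchy--Schwarz together with $\|\varphi^0_L\|_{L^2}=1$ immediately gives $\bar\varphi \leq 1/\sqrt{L}$, which accounts for the ``$1$'' in the numerator of \eqref{XXX}. Since $\phi$ has zero mean, the intermediate value theorem yields some $x_0\in I$ with $\phi(x_0)=0$; combined with Cauchy--Schwarz and $\phi' = (\varphi^0_L)'$, this gives
\[
|\phi(y)|=\Bigl|\int_{x_0}^{y}\phi'(t)\,dt\Bigr|\leq \sqrt{|y-x_0|}\,\|\phi'\|_{L^2}\leq \sqrt{L}\,\|(\varphi^0_L)'\|_{L^2}\qquad (y\in I).
\]
The energy identity $\lambda_0(L)=\|(\varphi^0_L)'\|_{L^2}^2+\int_I v(\varphi^0_L)^2\,dx$ together with $v\geq 0$ then yields $\|(\varphi^0_L)'\|_{L^2}^2\leq\lambda_0(L)$, so that $\|\phi\|_{L^\infty}\leq \sqrt{L\,\lambda_0(L)}$.

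The remaining step is to prove a variational upper bound on $\lambda_0(L)$ of order $1/L^2$. The constant test function $\chi\equiv 1/\sqrt{L}$ will not work here: although $v\in L^\infty$, the quantity $L^{-1}\int_I v\,dx$ is controlled only by $\|v\|_{L^\infty}$, and not by $C$ alone, because the bound $|v(x)|\leq C/|x|^2$ is singular at the origin. I would therefore use a normalized test function $\chi\in H^1(I)$ that vanishes at $0$, for instance the linear tent $\chi(x) := \sqrt{12/L^3}\,|x|$. For this choice, $\|\chi'\|_{L^2}^2 = 12/L^2$ and the pointwise estimate $v(x)\chi(x)^2 \leq 12C/L^3$ gives $\int_I v\chi^2\,dx \leq 12C/L^2$; hence $\lambda_0(L)\leq 12(1+C)/L^2$, which is comfortably smaller than $(4\pi^2+16C)/L^2$ (since $12\leq 4\pi^2$ and $12\leq 16$).

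Putting the pieces together,
\[
\sup_I|\varphi^0_L|\;\leq\;\bar\varphi+\|\phi\|_{L^\infty}\;\leq\;\frac{1}{\sqrt{L}}+\sqrt{L\,\lambda_0(L)}\;\leq\;\frac{1+\sqrt{4\pi^2+16C}}{\sqrt{L}},
\]
which is \eqref{XXX}. The main obstacle I expect is the variational upper bound on $\lambda_0(L)$: since $v$ can be arbitrarily large on arbitrarily small neighbourhoods of the origin, any admissible test function must vanish there, producing a kinetic cost of order $1/L^2$; any looseness in estimating the Hardy-type integral $\int_I\chi^2/|x|^2\,dx$ propagates directly into the final constant.
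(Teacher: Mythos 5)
Your proof is correct, and the overall architecture matches the paper's: both arguments reduce the supremum bound to (i) a pivot point where the eigenfunction is controlled by $1/\sqrt{L}$, (ii) the estimate $\|(\varphi^0_L)'\|_{L^2}^2\leq\lambda_0(L)$ from the energy identity with $v\geq 0$, and (iii) a variational bound $\lambda_0(L)=O(1/L^2)$ with a constant depending only on $C$. The two auxiliary steps are handled differently, though. For (i), the paper observes directly that since $\int_I(\varphi^0_L)^2\,\ud x=1$ and $\varphi^0_L>0$, continuity forces $\varphi^0_L(x_0^L)=1/\sqrt{L}$ at some $x_0^L\in I$, and then integrates $(\varphi^0_L)'$ from that point; your mean-plus-fluctuation decomposition reaches the same inequality $\sup_I|\varphi^0_L|\leq 1/\sqrt{L}+\sqrt{L}\,\|(\varphi^0_L)'\|_{L^2}$ by a slightly longer but equally valid route. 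For (iii), the paper localizes the trial function away from the singularity at the origin by taking the mixed Dirichlet--Neumann ground state on $(L/4,L/2)$, where $\|v\|_{L^\infty}\leq 16C/L^2$, yielding $\lambda_0(L)\leq(4\pi^2+16C)/L^2$; your tent function $\chi(x)=\sqrt{12/L^3}\,|x|$ handles the singularity by vanishing at the origin instead of avoiding it, and gives the sharper bound $12(1+C)/L^2$. Your diagnosis that the constant test function fails here is also the right observation --- it is exactly why the paper resorts to a localized trial function. Both approaches are sound; yours yields a marginally better constant in the eigenvalue bound, while the paper's keeps the final constant $1+\sqrt{4\pi^2+16C}$ as stated.
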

\begin{proof} In a first step we realize that, for every $L > 0$, there exists $x^L_0 \in I$ such that $\varphi^0_L(x^L_0)=\frac{1}{\sqrt{L}}$. Furthermore,
	\begin{equation*}
	\varphi^0_L(x)=\int_{x^L_0}^x(\varphi^0_L(t))^{\prime}\ \ud t+\frac{1}{\sqrt{L}}\ ,
	\end{equation*}
	which yields, for $x \in I$ and all $L > 0$,
	\begin{equation}\label{CCC}\begin{split}
	|\varphi^0_L(x)| &\leq \sqrt{\lambda_0(L)} \cdot \sqrt{L}+\frac{1}{\sqrt{L}} \ ,
	\end{split}
	\end{equation}
making use of the minmax-principle \cite{schmudgen2012unbounded}. Now, for the potentials considered in Proposition~\ref{UpperBoundSup}, the minmax-principle also gives the bound
\begin{equation}\label{explicBound}
\lambda_0(L) \leq \frac{4\pi^2+16C}{L^2}\ ,
\end{equation}
valid for all $L > 0$. To see this, one picks as a trial function the (normalized) ground state of the Laplacian on the interval $(+L/4,+L/2)$ subject to Dirichlet boundary conditions at $x=L/4$ and Neumann boundary conditions at $x=L/2$. Eq.~\eqref{explicBound} then follows from the bound 
\begin{equation*}
\lambda_0(L) \leq \frac{\pi^2}{(L/2)^2}+\|v\|_{L^{\infty}((+L/4,+L/2))} 
\end{equation*}
which itself follows from the Rayleigh quotient. The statement then follows readily combining~\eqref{explicBound} and~\eqref{CCC}.
\end{proof}                                                              
In a next step we derive a lower bound to the infimum of the ground-state eigenfunction as well as a Harnack-type inequality for a larger class of potentials.
\begin{lemma}\label{LB} Assume $v \in L^\infty(\mathbb{R})$ and $v \geq 0$. Then 
	\begin{equation}\label{EQINF}
	\inf_{x \in I}|\varphi^0_L(x)| \geq \frac{\mathrm{e}^{-4L\|v\|_{L^1(I)}}}{\sqrt{L}}
	\end{equation}
	holds for all $L > 0$. Furthermore, for all $L > 0$, one has 
	\begin{equation}\label{HarnackINEQ}
		\inf_{x \in I}|\varphi^0_L(x)| \geq \mathrm{e}^{-4L\|v\|_{L^1(I)}} \cdot	\sup_{x \in I}|\varphi^0_L(x)|\ .
	\end{equation}
	.
\end{lemma}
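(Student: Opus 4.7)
Since $\varphi := \varphi^0_L$ is strictly positive on $\bar I$, its logarithmic derivative $w := \varphi'/\varphi = (\log\varphi)'$ is continuous on $\bar I$ and $C^1$ on $I$; the Neumann boundary conditions give $w(\pm L/2) = 0$, and the eigenvalue equation $-\varphi'' + v\varphi = \lambda_0(L)\varphi$ translates into the Riccati identity
\[
w' + w^2 = v - \lambda_0(L).
\]
Both assertions of the lemma reduce to a pointwise bound of the form $|w(x)| \leq c\,\|v\|_{L^1(I)}$ for some numerical constant $c$: integrating $|w|$ over any subinterval immediately yields the Harnack bound $\varphi(x)/\varphi(y) \geq \mathrm{e}^{-cL\|v\|_{L^1(I)}}$ for all $x,y\in I$, and the infimum bound~\eqref{EQINF} then follows by choosing $y=x^L_0\in I$ with $\varphi(x^L_0) = 1/\sqrt{L}$, a point whose existence has already been exhibited in the proof of Proposition~\ref{UpperBoundSup} via continuity of $\varphi$ and the normalization $\|\varphi\|_{L^2(I)}=1$.

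The pointwise bound on $w$ rests on two integrations of the Riccati identity. First, integrating over all of $I$ and using the Neumann data yields the global $L^2$-identity
\[
\int_I w^2\,\ud t = \|v\|_{L^1(I)} - \lambda_0(L)\,L,
\]
which, since its left-hand side is non-negative, simultaneously provides the a priori bounds $\int_I w^2 \leq \|v\|_{L^1(I)}$ and $\lambda_0(L)\,L \leq \|v\|_{L^1(I)}$ at no additional cost. Second, integrating the Riccati identity from $-L/2$ to an arbitrary $x\in I$ gives
\[
w(x) = \int_{-L/2}^x \bigl(v-\lambda_0(L)\bigr)\,\ud t - \int_{-L/2}^x w^2\,\ud t,
\]
and applying the previous estimates to each of the three contributions on the right produces $|w(x)| \leq c\,\|v\|_{L^1(I)}$ with an explicit small numerical constant; the value stated in~\eqref{EQINF} and~\eqref{HarnackINEQ} then follows from routine bookkeeping, possibly after rounding up the constant.

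The delicate step in this plan is the pointwise control of $w$. A naive attempt to bound $\int_I w^2$ via the spectral inequality $\int_I(\varphi')^2 \leq \lambda_0(L)$ runs into a circularity, since $\int_I w^2 = \int_I (\varphi')^2/\varphi^2$ requires a lower bound on $\varphi$, which is exactly what we are trying to prove. It is the global Riccati identity that sidesteps this difficulty, and the combination of this identity with the favourable sign of the quadratic term $\int w^2$ in the second integration is precisely what forces the final bound to be linear in $\|v\|_{L^1(I)}$, producing a Harnack constant of the desired exponential shape $\exp(-cL\|v\|_{L^1(I)})$.
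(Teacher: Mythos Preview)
Your approach is correct and follows essentially the same route as the paper: bound the logarithmic derivative $w=(\varphi^0_L)'/\varphi^0_L$ pointwise by a constant multiple of $\|v\|_{L^1(I)}$, integrate over $I$ to obtain the Harnack inequality~\eqref{HarnackINEQ}, and combine with $\sup_{x\in I}|\varphi^0_L(x)|\geq 1/\sqrt{L}$ to get~\eqref{EQINF}. The only real difference is that the paper imports the pointwise bound on $w$ from an external reference and obtains $\lambda_0(L)\leq\|v\|_{L^1(I)}/L$ separately via the minmax principle with trial function $1/\sqrt{L}$, whereas you derive everything directly from the Riccati identity $w'+w^2=v-\lambda_0(L)$ together with the Neumann data $w(\pm L/2)=0$. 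Your global identity $\int_I w^2=\|v\|_{L^1(I)}-\lambda_0(L)L$ is a nice touch, delivering both $\int_I w^2\leq\|v\|_{L^1(I)}$ and the eigenvalue bound in one stroke; in fact, if you do the bookkeeping carefully (splitting the signed terms in $w(x)=\int_{-L/2}^x v-\lambda_0(L)(x+L/2)-\int_{-L/2}^x w^2$ and using the global identity for the lower bound), you obtain $|w(x)|\leq\|v\|_{L^1(I)}$ and hence a Harnack constant $\mathrm{e}^{-L\|v\|_{L^1(I)}}$, sharper than the paper's $\mathrm{e}^{-4L\|v\|_{L^1(I)}}$.
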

\begin{proof} We follow the strategy outlined in \cite{MR2110110} and, in particular, the proof of [Theorem~1.2,\cite{MR2110110}] . [Eq.~(2.3),\cite{MR2110110}] implies, for all $x \in I$,
	\begin{equation*}
	\int_I\left(q(x)-|q(x)|\right)\ \ud x \leq \frac{(\varphi^0_L)^{\prime}(x)}{\varphi^0_L(x)}\leq \|q\|_{L^1(I)}\ ,
	\end{equation*}
	where $q(x):=\lambda_0(L)-v(x)$. This immediately gives
	\begin{equation*}
	\left|\frac{(\varphi^0_L)^{\prime}(x)}{\varphi^0_L(x)} \right| \leq 2\|q\|_{L^1(I)}\ .
	\end{equation*}
	Note that $\|q\|_{L^1(I)} \leq 2\|v\|_{L^1(I)}$ since
	\begin{equation*}
	\lambda_0(L) \leq \frac{\|v\|_{L^1(I)}}{L}
	\end{equation*}
	by the minmax-principle (using $1/\sqrt{L}$ as a test function in the Rayleigh quotient). Furthermore, this shows that the constant $C_1$ in the last steps of the proof of [Theorem~1.2,\cite{MR2110110}] can be chosen to be $4\|v\|_{L^1(I)}$ showing that
	\begin{equation}\label{HarnackIEQ}
	\frac{\varphi^0_L(y)}{\varphi^0_L(x)} \leq \mathrm{e}^{4L\|v\|_{L^1(I)}}\ , \quad x,y \in I\ .
	\end{equation}
Eq.~\eqref{HarnackIEQ} then immediately implies~\eqref{HarnackINEQ}. Finally, noting that
	\begin{equation*}
	\sup_{x \in I}|\varphi^0_L(x)| \geq \frac{1}{\sqrt{L}}
	\end{equation*}
	one arrives at~\eqref{EQINF}. 
\end{proof}
\begin{rem} Eq.\eqref{EQINF} in Lemma~\ref{LB} is sharp: choosing $v \equiv 0$ yields the lower bound $\frac{1}{\sqrt{L}}$. On the other hand, for the zero-potential, the ground state is given by $\varphi^L_0(x)=\frac{1}{\sqrt{L}}$.
\end{rem}
Using [Theorem~1.4,\cite{KirschGap}] in combination with Lemma~\ref{LB} then yields the main result of the paper.
\begin{theorem}[Lower bound spectral gap] \label{MainResult}
	Assume $v \in L^\infty(\mathbb{R})$ and $v \geq 0$.  Then,
	\begin{equation*}\begin{split}
	\Gamma_v(L) \geq \mathrm{e}^{-8L\|v\|_{L^1(I)}} \cdot \frac{\pi^2}{L^2} 
	\end{split}
	\end{equation*}
	holds for all $L > 0$. 
\end{theorem}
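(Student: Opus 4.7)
The strategy is to combine the Harnack-type estimate~\eqref{HarnackINEQ} of Lemma~\ref{LB} with the abstract lower bound on the Neumann spectral gap contained in [Theorem~1.4,\cite{KirschGap}], the latter being treated as a black box. Kirsch's theorem provides, for the positive ground state $\varphi^0_L$ of $h_L$ on $I$, a lower bound of the form

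\begin{equation*}
\Gamma_v(L) \;\geq\; \left(\frac{\inf_{x \in I}\varphi^0_L(x)}{\sup_{x \in I}\varphi^0_L(x)}\right)^{\!2}\cdot \mu_1(L),
\end{equation*}

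where $\mu_1(L)=\pi^2/L^2$ is the first non-trivial eigenvalue of the free Neumann Laplacian on $I$. Heuristically, estimates of this shape arise from the ground-state transformation $u\mapsto u/\varphi^0_L$, which recasts $h_L-\lambda_0(L)$ as a Sturm--Liouville operator with positive weight $(\varphi^0_L)^2$; comparing the weighted min-max principle with its unweighted counterpart then produces the squared-oscillation prefactor.

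First I would identify $\pi^2/L^2$ as the first non-zero Neumann eigenvalue of $-\mathrm{d}^2/\mathrm{d}x^2$ on $I=(-L/2,+L/2)$, which accounts for the factor $\pi^2/L^2$ appearing on the right-hand side of the theorem. Next, I would invoke~\eqref{HarnackINEQ} of Lemma~\ref{LB}, which immediately gives

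\begin{equation*}
\frac{\inf_{x \in I}\varphi^0_L(x)}{\sup_{x \in I}\varphi^0_L(x)} \;\geq\; \mathrm{e}^{-4L\|v\|_{L^1(I)}}.
\end{equation*}

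Squaring this inequality produces the factor $\mathrm{e}^{-8L\|v\|_{L^1(I)}}$, and plugging both ingredients into Kirsch's abstract estimate yields the claimed bound.

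The substantive difficulty is of course encapsulated in [Theorem~1.4,\cite{KirschGap}]; once that quantitative estimate is taken for granted, the remainder of the argument is a one-line substitution, since Lemma~\ref{LB} was engineered precisely to provide the required geometric control on $\varphi^0_L$. A minor point worth verifying before applying the black box is merely that Kirsch's result is formulated in (or can be adapted to) the Neumann setting on a finite interval, so that the free comparison eigenvalue really equals $\pi^2/L^2$ rather than some other constant; apart from this compatibility check, no further analysis is required.
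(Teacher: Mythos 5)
Your proposal is correct and follows essentially the same route as the paper: invoke [Theorem~1.4,\cite{KirschGap}] to obtain $\Gamma_v(L) \geq \bigl(\inf_{x\in I}\varphi^0_L / \sup_{x\in I}\varphi^0_L\bigr)^2 \cdot \pi^2/L^2$, identify $\pi^2/L^2$ as the Neumann spectral gap of the free Laplacian on $I$, and then substitute the Harnack-type bound \eqref{HarnackINEQ} from Lemma~\ref{LB}, whose square gives the factor $\mathrm{e}^{-8L\|v\|_{L^1(I)}}$. No further comment is needed.
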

\begin{proof} The idea is to compare the spectral gap of the operator $h_L$ with the spectral gap of the Neumann Laplacian (i.e., setting $v \equiv 0$) which is given by $\pi^2/L^2$. Such a comparison result has been provided in [Theorem~1.4,\cite{KirschGap}]: more explicitly, one has
	\begin{equation*}\begin{split}
	\Gamma_v(L) &\geq  \left(\frac{\inf_{x \in I} |\varphi^0_L(x)|}{\sup_{x \in I} |\varphi^0_L(x)|}\right)^2 \cdot \frac{\pi^2}{L^2} \ ,
	\end{split}
	\end{equation*}
	taking into account that the normalized ground-state eigenfunction for the Neumann Laplacian is the constant function $1/\sqrt{L}$. The result then readily follows with Lemma~\ref{LB}.
\end{proof}
\begin{remark} Theorem~\ref{MainResult} establishes a lower bound which is (for non-zero potentials) at least exponentially small in the interval length. Hence, this bound is still far away from a lower bound as established, for example, in [Proposition~2.9,\cite{KT20}] for a symmetric step-potential. In this case, the lower bound reads $\alpha L^{-3}$ for some constant $\alpha > 0$ and all $L > 0$ large enough. However, due to the fact that a Harnack-type inequality has been used in the proof of Theorem~\ref{MainResult}, an exponential factor seems expectable.
	
	Also, for the zero-potential $v \equiv 0$, the lower bound in Theorem~\ref{MainResult} is sharp.
\end{remark}
%
%
	%
	%


\vspace*{0.5cm}

\subsection*{Acknowledgement}{}JK would like to thank M.~Täufer for helpful discussions.

\vspace*{0.5cm}

{\small
\bibliographystyle{amsalpha}
\bibliography{Literature}}

\end{document}